\renewcommand\section{\@startsection{section}{1}{\z@}%
                                  {-2.0ex \@plus -1ex \@minus -.2ex}%
                                  {2.0ex \@plus.2ex}%
                                  {\normalfont\normalsize\bfseries}}
\renewcommand{\mod}{{\ \rm mod\ }}
\newcommand{\alphaa}[2]{\alpha_{#1#2}}
\newcommand{\alphaaa}[3]{\alpha_{#1#2}^{#3}}
\newcommand{\nn}[2]{n_{#1#2}}
\newcommand{\AAA}[3]{A_{#1#2#3}}
\newcommand{\X}[2]{X_{#1#2}}
\newcommand{\ang}[1]{\langle#1\rangle}
\newcommand{\Z}{{\sf Z}}
\newcommand{\nat}{{\sf N}}
\newcommand{\rat}{{\sf Q}}
\newcommand{\real}{{\sf R}}
\newcommand{\xvec}[1]{\ifcase 3{#1} {\ang {x_1,x_2,x_3} } \else 
\ifcase 4{#1} {\ang{x_1,x_2,x_3,x_4}} \else {\ang {x_1,\ldots,x_{#1}}}\fi\fi}
\newcommand{\yvec}[1]{\ifcase 3{#1} {\ang {y_1,y_2,y_3} } \else 
\ifcase 4{#1} {\ang{y_1,y_2,y_3,y_4}} \else {\ang {y_1,\ldots,y_{#1}}}\fi\fi}
\newcommand{\zvec}[1]{\ifcase 3{#1} {\ang {z_1,z_2,z_3} } \else 
\ifcase 4{#1} {\ang{z_1,z_2,z_3,z_4}} \else {\ang {z_1,\ldots,z_{#1}}}\fi\fi}
\newcommand{\vecc}[2]{\ifcase 3{#2} {\ang { {#1}_1,{#1}_2,{#1}_3 } } \else
\ifcase 4{#1} {\ang { {#1}_1,{#1}_2,{#1}_3,{#1}_{4} } }
\else {\ang { {#1}_1,\ldots,{#1}_{#2}}}\fi\fi}
\newcommand{\veccd}[3]{\ifcase 3{#2} {\ang { {#1}_{{#3}1},{#1}_{{#3}2},{#1}_{{#3}3} } } \else
\ifcase 4{#1} {\ang { {#1}_{{#3}1},{#1}_{{#3}2},{#1}_{#3}3},{#1}_{{#3}4} }
\else {\ang { {#1}_{{#3}1},\ldots,{#1}_{{#3}{#2}}}}\fi\fi}
\newcommand{\veccz}[2]{\ifcase 3{#2} {\ang { {#1}_0,{#1}_2,{#1}_3 } } \else
\ifcase 4{#1} {\ang { {#1}_0,{#1}_2,{#1}_3,{#1}_{4} } }
\else {\ang { {#1}_0,\ldots,{#1}_{#2}}}\fi\fi}
\newcommand{\xve}[1]{\ifcase 3{#1} {x_1,x_2,x_3} \else 
\ifcase 4{#1} {x_1,x_2,x_3,x_4} \else {x_1,\ldots,x_{#1}}\fi\fi}
\newcommand{\yve}[1]{\ifcase 3{#1} {y_1,y_2,y_3} \else 
\ifcase 4{#1} {y_1,y_2,y_3,y_4} \else {y_1,\ldots,y_{#1}}\fi\fi}
\newcommand{\zve}[1]{\ifcase 3{#1} {z_1,z_2,z_3} \else 
\ifcase 4{#1} {z_1,z_2,z_3,z_4} \else {z_1,\ldots,z_{#1}}\fi\fi}
\newcommand{\ve}[2]{\ifcase 3#2 {{#1}_1,{#1}_2,{#1}_3} \else
\ifcase 4#2 {{#1}_1,{#1}_2,{#1}_3,{#1}_{4}}
\else {{#1}_1,\ldots,{#1}_{#2}}\fi\fi}
\newcommand{\ved}[3]{\ifcase 3#2 {{#1}_{{#3}1},{#1}_{{#3}2},{#1}_{{#3}3}} \else
\ifcase 4#2 {{#1}_{{#3}1},{#1}_{{#3}2},{#1}_{{#3}3},{#1}_{{#3}4}}
\else {{#1}_{{#3}1},\ldots,{#1}_{{#3}{#2}}}\fi\fi}
\newcommand{\fuve}[3]{
\ifcase 3#2
{{#3}({#1}_1),{#3}({#1}_2,{#3}({#1}_3)} \else
\ifcase 4#2
{{#3}({#1}_1),{#3}({#1}_2),{#3}({#1}_3),{#3}({#1}_4)}
\else
{{#3}({#1}_1),\ldots,{#3}({#1}_{#2})}\fi\fi}
\newcommand{\setmathchar}[1]{\ifmmode#1\else$#1$\fi}
\newcommand{\vlist}[2]{%
	\setmathchar{%
		\compound#2\one{#2}\two
		\ifcompound
			({#1}_1,\ldots,{#1}_{#2})
		\else
			\ifcat N#2
				({#1}_1,\ldots,{#1}_{#2})
			\else
				\ifcase#2
					({#1}_0)\or
					({#1}_1)\or
					({#1}_1,{#1}_2)\or 
					({#1}_1,{#1}_2,{#1}_3)\or
					({#1}_1,{#1}_2,{#1}_3,{#1}_4)\else 
					({#1}_1,\ldots,{#1}_{#2})
				\fi
			\fi
		\fi}}
\newif\ifcompound
\def\compound#1\one#2\two{%
	\def\one{#1}
	\def\two{#2}
	\if\one\two
		\compoundfalse
	\else
		\compoundtrue
	\fi}
\newcommand{\xwe}[1]{\ifcase 3{#1} {x_1\wedge x_2\wedge x_3} \else 
\ifcase 4{#1} {x_1\wedge x_2\wedge x_3\wedge x_4} \else {x_1\wedge \cdots \wedge
x_{#1}}\fi\fi}
\newcommand{\we}[2]{\ifcase 3#2 {\ang { {#1}_1\wedge {#1}_2\wedge {#1}_3 } } \else
\ifcase 4{#1} {\ang { {#1}_1\wedge {#1}_2\wedge {#1}_3\wedge {#1}_{4} } }
\else {\ang { {#1}_1\wedge \cdots\wedge {#1}_{#2}}}\fi\fi}
\newcommand{\st}{\mathrel{:}}
\newcommand{\s}[1]{\s_{#1}}
\newcommand{\monus}{\;\raise.5ex\hbox{{${\buildrel
    \ldotp\over{\hbox to 6pt{\hrulefill}}}$}}\;}
\newcommand{\infinity}{\infty}
\newcounter{savenumi}
\newtheorem{theoremfoo}{Theorem}[section] 
\newenvironment{theorem}{\pagebreak[1]\begin{theoremfoo}}{\end{theoremfoo}}
\newtheorem{lemmafoo}[theoremfoo]{Lemma}
\newenvironment{lemma}{\pagebreak[1]\begin{lemmafoo}}{\end{lemmafoo}}
\newtheorem{conjecturefoo}[theoremfoo]{Conjecture}
\newtheorem{conventionfoo}[theoremfoo]{Convention}
\newtheorem{porismfoo}[theoremfoo]{Porism}
\newtheorem{gamefoo}[theoremfoo]{Game}
\newtheorem{corollaryfoo}[theoremfoo]{Corollary}
\newenvironment{corollary}{\pagebreak[1]\begin{corollaryfoo}}{\end{corollaryfoo}}
\newtheorem{openfoo}[theoremfoo]{Open Problem}
\newtheorem{exercisefoo}{Exercise}
\newcommand{\fig}[1] 
{
 \begin{figure}
 \begin{center}
 \input{#1}
 \end{center}
 \end{figure}
}
\newtheorem{potanafoo}[theoremfoo]{Potential Analogue}
\newtheorem{notefoo}[theoremfoo]{Note}
\newenvironment{note}{\pagebreak[1]\begin{notefoo}\rm}{\end{notefoo}}
\newtheorem{notabenefoo}[theoremfoo]{Nota Bene}
\newtheorem{nttn}[theoremfoo]{Notation}
\newenvironment{notation}{\pagebreak[1]\begin{nttn}\rm}{\end{nttn}}
\newtheorem{empttn}[theoremfoo]{Empirical Note}
\newtheorem{examfoo}[theoremfoo]{Example}
\newtheorem{dfntn}[theoremfoo]{Definition}
\newtheorem{propositionfoo}[theoremfoo]{Proposition}
\newenvironment{proof}
    {\pagebreak[1]{\narrower\noindent {\bf Proof:\quad\nopagebreak}}}{\QED}
\newcommand{\yyskip}{\penalty-50\vskip 5pt plus 3pt minus 2pt}
\newcommand{\blackslug}{\hbox{\hskip 1pt
        \vrule width 4pt height 8pt depth 1.5pt\hskip 1pt}}
\newcommand{\QED}{{\penalty10000\parindent 0pt\penalty10000
        \hskip 8 pt\nolinebreak\blackslug\hfill\lower 8.5pt\null}
        \par\yyskip\pagebreak[1]}
\newcommand{\BBB}{{\penalty10000\parindent 0pt\penalty10000
        \hskip 8 pt\nolinebreak\hbox{\ }\hfill\lower 8.5pt\null}
        \par\yyskip\pagebreak[1]}
\newtheorem{factfoo}[theoremfoo]{Fact}
\newenvironment{block}{\begin{list}{\hbox{}}{\leftmargin 1em
    \itemindent -1em \topsep 0pt \itemsep 0pt \partopsep 0pt}}{\end{list}}
\begin{document}

\newcommand{\KN}{K_{\nat}}
\newcommand{\NRE}{\hbox{NUM-RED-EDGES\ }}
\newcommand{\NBE}{\hbox{NUM-BLUE-EDGES\ }}
\newcommand{\RED}{\hbox{RED\ }}
\newcommand{\BLUE}{\hbox{BLUE\ }}
\newcommand{\REDns}{\hbox{RED}}
\newcommand{\BLUEns}{\hbox{BLUE}}

\title{Three Results on Making Change (An Exposition)}

\author{
{William Gasarch}
\thanks{
University of Maryland,
Dept. of Computer Science,
	College Park, MD\ \ 20742,
\texttt{gasarch@cs.umd.edu}
}
\and
{Naveen Raman}
\thanks{
Richard Montgomery High School,
	Rockville, MD\ \ 20850
\texttt{dsfan414@gmail.com}
}
}

\date{}

\maketitle

\begin{abstract}
Let $a_1,\ldots,a_L$ be relatively prime.
We think of them as coin denominations.
Let $M=LCM(a_1,\ldots,a_L)$ and 
let $CH(n)$ be the number of ways to make change of $n$ cents.
We show there is an
{\it exact} piece wise formula for 
$CH(n)$.
The pieces are polynomials that 
depend on $n\mod M$.
We show that many of the pieces agree
on all but the constant term.
These results are not new; however, our treatment is self-contained, unified, and elementary.
\end{abstract}
 
\section{Introduction}

Throughout this paper we let:
\begin{enumerate}
\item
$a_1,a_2,\ldots,a_L$ be coin denominations.
Assume you have an unlimited number of each coin.
They need not be distinct. Think of having red nickels and blue nickels.
\item
$M=LCM(a_1,\ldots,a_L)$.
\item
$M'= LCM(GCD(a_1,a_2),GCD(a_1,a_3),\ldots,GCD(a_{L-1},a_L))$.
\end{enumerate}

\begin{notation}
If $a_1,\ldots,a_L$ are given then
$CH(n)$ is the number of ways to make change of $n$ cents.
Sylvester called $CH(n)$ {\it the denumerant}.
\end{notation}

Determining $CH(n)$ is known as {\it the problem of finding the coefficients of 
the Sylvester denumerant}.
It is related to the well known Frobenius problem: {\it What is the largest $n$ such that $CH(n)=0$?}
Modern papers on this topic tend to use advanced mathematics.
We list some of the papers~\cite{agnden,alonden,balden,beckfrob,bellden,komfrob,losden,serden}
and some of the books~\cite{alffrob,beckrobins,comtet,riordan} where the problem is discussed.

We obtain an {\it exact} piece wise formula for $CH(n)$ and then refine it.
Our results are not new; however, 
our treatment is self-contained, unified, and elementary treatment.
We include the polynomials for several
coin sets in the Appendix and make
some observations and conjectures.

Our results begin with the following premise:
$\{a_1,\ldots,a_L\}$ is a set of coin denominations that are relatively prime 
with $M$, $M'$ as above. Note that if the coin set is $\{1,5,10,25\}$ then
$M=50$ and $M'=5$. This is typical in that $M'$ is usually much less than $M$.

Our first result is that 
there exist $h_0,h_1,\ldots,h_{M-1}\in \rat[x]$ of degree $L-1$ such that 
$$CH(n) = h_{n \mod M}(n).$$
Bell~\cite{bellden}  attributes this result to 
Sylvester and Cayley 
and refers the reader to Dickson~\cite{dickson} (vol 2) for the history of
denumerants up to 1919.
Bell~\cite{bellden} gave a proof that is simpler than the proof of
Sylvester and Cayley. Our proof is similar to Bell's.

Our second results shows that if you ignore the constant term then many of the
polynomials are identical.
Keep in mind that $M'$ is usually much less than $M$.
We show that there there exist $h_0',\ldots,h_{M'-1}'\in \rat[x]$ of degree $L-1$ and
rationals $b_0,\ldots,b_{M-1}$ such that 
$$CH(n) = h_{n\mod {M'}}'(n) + b_{n\mod  M}.$$
This can be derived from Theorem 1.7 (page 15) of the book by
Beck and Robins~\cite{beckrobins} and probably from other formulas for
$CH(n)$ as well. Our proof is simpler than theirs and may be new.

Our third result is that
$$CH(n)=\frac{n^{L-1}}{(L-1)!a_1a_2\cdots a_L} + O(n^{L-2}).$$
This result is attributed to Schur by 
Riordan~\cite{riordan}, Wilf~\cite{wilfgen}, 
and all of the papers and books cited above that mention it.
Our proof is similar to the one in Wilf's book on
generating functions~\cite{wilfgen}. After we prove this we will give a geometric
interpretation.

We then obtain, as a corollary, three theorems that are similar to those stated
above; however, they apply to {\it any} coin set $\{a_1,\ldots,a_L\}$.

\section{Needed Lemmas}\label{se:lemmas}

We obtain the Taylor expansion for $\frac{1}{(1-x)^L}$ via combinatorics,
not calculus.

\begin{lemma}\label{le:taylor}
For all $L$, $\frac{1}{(1-x)^L} = \sum_{n=0}^\infinity \binom{L-1+n}{L-1}x^n$.
\end{lemma}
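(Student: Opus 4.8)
The plan is to treat both sides as formal power series in $x$ and to show that the coefficient of $x^n$ agrees on each side; the point of the remark ``via combinatorics, not calculus'' is that we never differentiate, we count. The one fact I would take as the starting point is the formal geometric series $\frac{1}{1-x} = \sum_{k=0}^{\infty} x^k$, which is simply the assertion that $(1-x)\sum_{k\ge 0} x^k = 1$ after cancellation. From this, $\frac{1}{(1-x)^L}$ is just the $L$-fold product $\left(\sum_{k=0}^{\infty} x^k\right)^L$.

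Next I would expand this product. In the ring of formal power series, the coefficient of $x^n$ in the $L$-fold product is obtained by collecting all ways of choosing one monomial $x^{k_j}$ from each of the $L$ factors so that the exponents sum to $n$, each such choice contributing $1$. Hence the coefficient of $x^n$ is exactly the number of tuples $(k_1,\ldots,k_L)$ of nonnegative integers with $k_1 + \cdots + k_L = n$. This count is finite, so there is no convergence issue to address: the identity is purely formal, which is exactly the combinatorial (rather than analytic) viewpoint the section promises.

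It then remains to count these tuples, and here I would use the standard stars-and-bars bijection: a tuple $(k_1,\ldots,k_L)$ summing to $n$ corresponds to an arrangement of $n$ indistinguishable stars and $L-1$ bars in a row, the bars separating the stars into $L$ (possibly empty) blocks of sizes $k_1,\ldots,k_L$. There are $\binom{n+L-1}{L-1}$ such arrangements, which equals $\binom{L-1+n}{L-1}$, matching the right-hand side.

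The main obstacle is not the power-series bookkeeping, which is routine once the formal setting is fixed, but rather giving a clean self-contained justification of the count $\binom{L-1+n}{L-1}$ that fits the elementary spirit of the paper. If one prefers not to invoke the stars-and-bars formula outright, I would instead argue by induction on $L$: the base case $L=1$ is the geometric series since $\binom{n}{0}=1$, and the inductive step uses $\frac{1}{(1-x)^{L+1}} = \frac{1}{1-x}\cdot\frac{1}{(1-x)^L}$, so that the coefficient of $x^N$ becomes $\sum_{n=0}^{N}\binom{L-1+n}{L-1}$. The single identity then needed is the hockey-stick summation $\sum_{n=0}^{N}\binom{L-1+n}{L-1} = \binom{L+N}{L}$, which itself follows by a short telescoping argument from Pascal's rule.
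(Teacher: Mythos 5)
Your proof is correct and takes essentially the same route as the paper's: both rewrite $\frac{1}{(1-x)^L}$ as the $L$-fold product $(1+x+x^2+\cdots)^L$, identify the coefficient of $x^n$ with the number of nonnegative integer solutions of $k_1+\cdots+k_L=n$, and count these by the stars-and-bars (dots-and-bars) bijection to get $\binom{L-1+n}{L-1}$. The inductive alternative you sketch at the end is fine but unnecessary; the paper simply invokes the bijection directly.
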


\begin{proof}
We rewrite this as
$$(1+x+x^2+\cdots )^L = \sum_{n=0}^\infinity \binom{L-1+n}{L-1}x^n.$$

Let $S(L,n)$ be the 
number of solutions of $x_1 + \cdots + x_L=n$ where $x_i\ge 0$.
Clearly the coefficient of $x^n$ of the LHS is $S(L,n)$.
By viewing $S(L,n)$  as the number of ways of permuting $n$ dots and $L-1$ bars
we see that $S(L,n)=\binom{L-1+n}{L-1}.$
Hence the LHS and the RHS are the same.
\end{proof}

We leave the following lemma to the reader.

\begin{lemma}\label{le:prim}
If $\zeta^a=1$ then there exists $d$ such that
$\zeta$ is a primitive $d$th root of unity and $d$ divides $a$.
\end{lemma}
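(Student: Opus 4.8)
The plan is to extract from $\zeta$ its \emph{order} --- the least positive exponent that returns $\zeta$ to $1$ --- and to show that this order is the desired $d$. First I would let $d$ be the smallest positive integer with $\zeta^d=1$. Such a $d$ exists by well-ordering: the set of positive integers $k$ with $\zeta^k=1$ is nonempty, since $a$ itself lies in it (we are assuming $a\ge 1$), and every nonempty set of positive integers has a least element. By the very definition of a primitive root of unity, minimality of $d$ says precisely that $\zeta$ is a primitive $d$th root of unity, so half of the claim is immediate once $d$ has been named.

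It remains to show $d\mid a$, and here I would invoke the division algorithm. Write $a=qd+r$ with $0\le r<d$. Then
$$1=\zeta^a=\zeta^{qd+r}=(\zeta^d)^q\,\zeta^r=\zeta^r,$$
using $\zeta^d=1$. Thus $\zeta^r=1$ with $0\le r<d$; since $d$ is the \emph{least} positive exponent sending $\zeta$ to $1$, the only possibility is $r=0$. Hence $a=qd$, i.e.\ $d\mid a$, which completes the argument.

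There is no genuine obstacle here: the lemma is simply the standard fact that the order of a root of unity divides any exponent that annihilates it, and the entire content is the well-ordering step (to produce $d$) together with a single application of the division algorithm. The one point that deserves a word of care is the convention being used: ``$\zeta$ is a primitive $d$th root of unity'' must mean exactly ``$d$ is the order of $\zeta$,'' i.e.\ $\zeta^d=1$ and no smaller positive power of $\zeta$ equals $1$. With that convention the first half of the statement holds by construction and the second half is the short computation above, so the lemma follows directly.
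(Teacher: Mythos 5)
Your proof is correct: you take $d$ to be the order of $\zeta$ (which exists by well-ordering since $a$ itself is a positive exponent annihilating $\zeta$) and then use the division algorithm to force $d\mid a$, which is exactly the standard argument. Note that the paper offers no proof of this lemma at all --- it says ``We leave the following lemma to the reader'' --- so your write-up simply supplies the routine argument the authors intended the reader to perform.
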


\begin{lemma}\label{le:roots}
Let $a_1,\cdots,a_L$ be relatively prime.
Let $g(x) = (x^{a_1}-1)\cdots(x^{a_L}-1)$. 
When $g(x)$ is factored completely into
linear terms the factor $(x-1)$ occurs $L$ times and all of the other linear
factors occur $\le L-1$ times.
\end{lemma}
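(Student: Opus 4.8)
The plan is to analyze the multiplicity of each root of $g(x)$ directly, since a factorization into linear terms over $\complex$ records each root with multiplicity equal to its order of vanishing. The key observation is that for each $i$, the polynomial $x^{a_i}-1$ has $a_i$ \emph{distinct} roots, namely the $a_i$th roots of unity, so every linear factor of $x^{a_i}-1$ occurs exactly once. Consequently, for any $\zeta\in\complex$, the multiplicity of the factor $(x-\zeta)$ in $g(x)$ is precisely the number of indices $i\in\{1,\ldots,L\}$ for which $\zeta^{a_i}=1$. The whole problem thus reduces to bounding, for each candidate root $\zeta$, how many of the $L$ factors it can belong to.

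First I would dispose of the case $\zeta=1$. Since $1^{a_i}=1$ for every $i$, the value $1$ is a root of all $L$ factors, and by the distinctness remark it is a simple root of each. Hence $(x-1)$ occurs in $g(x)$ with multiplicity exactly $L$, giving the first assertion.

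Next I would handle an arbitrary root $\zeta\neq 1$. By Lemma~\ref{le:prim}, whenever $\zeta^{a_i}=1$ the element $\zeta$ is a primitive $d$th root of unity for some $d$ dividing $a_i$; let $d$ be the order of $\zeta$, which is well defined and satisfies $d\ge 2$ because $\zeta\neq 1$. Then $\zeta$ is a root of $x^{a_i}-1$ if and only if $d\mid a_i$. Suppose, for contradiction, that $\zeta$ belonged to all $L$ factors, i.e.\ that $d\mid a_i$ for every $i$. Then $d$ would divide $\gcd(a_1,\ldots,a_L)$, which equals $1$ because the $a_i$ are relatively prime, forcing $d=1$ and contradicting $d\ge 2$. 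Therefore at least one factor $x^{a_i}-1$ does not have $\zeta$ as a root, so the multiplicity of $(x-\zeta)$ in $g(x)$ is at most $L-1$, as claimed.

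There is no serious obstacle here; the argument is essentially a counting statement glued to the definition of relative primality. The only point requiring a little care is the translation from ``multiplicity in the full factorization of $g$'' to ``number of factors containing the root,'' which hinges on each cyclotomic factor $x^{a_i}-1$ being squarefree. I would state that squarefreeness explicitly (it follows because the $a_i$th roots of unity are $a_i$ distinct points), since it is exactly what lets me add multiplicities across the factors and conclude that the overall multiplicity of a root equals the number of $a_i$ divisible by its order.
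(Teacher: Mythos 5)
Your proof is correct and takes essentially the same route as the paper's: both reduce the multiplicity of a root $\zeta$ of $g(x)$ to counting the indices $i$ with $d\mid a_i$, where $d$ is the order of $\zeta$ obtained from Lemma~\ref{le:prim}, and then use relative primality of the $a_i$ to conclude that only $\zeta=1$ attains the full count $L$. Your explicit justification that each $x^{a_i}-1$ is squarefree simply spells out a step the paper leaves implicit.
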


\begin{proof}
Let $\zeta$ be a root of $g(x)$. 
We are concerned with the multiplicity of $\zeta$.
By Lemma~\ref{le:prim} $\zeta$ is a primitive $d$th root of unity
where $d$ divides some $a_i$. We denote this $d$ by $d_\zeta$.
The multiplicity of $\zeta$ is $|\{ 1\le j\le L \st d_\zeta | a_j \}|$.
Since the $a_i$'s are relatively prime the only $\zeta$ with
$|\{ 1\le j\le L \st d_\zeta | a_j \}|=L$ is $\zeta=1$.
\end{proof}

\begin{lemma}\label{le:gcd}
Let $a_1,a_2$ be integers and $\zeta$ be a complex number.
If $\zeta^{a_1}=1$ and $\zeta^{a_2}=1$ 
then $\zeta^{GCD(a_1,a_2)}=1$.
\end{lemma}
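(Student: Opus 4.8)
The plan is to reduce everything to B\'ezout's identity. Since $d = GCD(a_1,a_2)$ is the greatest common divisor of $a_1$ and $a_2$, there exist integers $x,y$ with $d = x a_1 + y a_2$. This is the single external fact I would invoke; the rest is just bookkeeping with exponents.

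First I would note that $\zeta \neq 0$, since $\zeta = 0$ would force $\zeta^{a_1} = 0 \neq 1$. Hence $\zeta^k$ is defined for every integer $k$, including negative ones, and the exponent law $\zeta^{j+k} = \zeta^j \zeta^k$ holds for all $j,k \in \Z$. With this in hand the computation is immediate:
$$\zeta^{d} = \zeta^{x a_1 + y a_2} = (\zeta^{a_1})^x (\zeta^{a_2})^y = 1^x \cdot 1^y = 1.$$

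The only point requiring any care — and so the ``hard part,'' such as it is — is that $x$ or $y$ may be negative, so I must be sure the middle equality is legitimate when raising $\zeta^{a_1}$ to a negative power. But this is harmless: from $\zeta^{a_1} = 1$ we get $\zeta^{-a_1} = (\zeta^{a_1})^{-1} = 1^{-1} = 1$, and likewise for $a_2$, so the displayed chain is valid whatever the signs of $x$ and $y$. An alternative route, closer in spirit to Lemma~\ref{le:prim}, would be to let $m$ be the multiplicative order of $\zeta$; then $\zeta^{a_1}=1$ and $\zeta^{a_2}=1$ give $m \mid a_1$ and $m \mid a_2$, whence $m \mid d$ and therefore $\zeta^{d}=1$. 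I would present the B\'ezout argument as the main proof, since it is the shortest and avoids even defining the order.
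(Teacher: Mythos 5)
Your proof is correct, but it takes a genuinely different route from the paper's. The paper proves this lemma as a two-line consequence of Lemma~\ref{le:prim}: since $\zeta^{a_1}=1$, $\zeta$ is a primitive $d$th root of unity for some $d$ dividing $a_1$; since $\zeta^{a_2}=1$, that same $d$ also divides $a_2$; hence $d$ divides $GCD(a_1,a_2)$ and so $\zeta^{GCD(a_1,a_2)}=1$. That is exactly your ``alternative route'' via the multiplicative order of $\zeta$ (being a primitive $d$th root of unity is the same as having order $d$), so the argument you relegated to a closing remark is in fact the paper's actual proof. Your main argument, via B\'ezout's identity $d=xa_1+ya_2$ and the computation $\zeta^d=(\zeta^{a_1})^x(\zeta^{a_2})^y=1$, is equally valid and is self-contained in a different way: it needs neither Lemma~\ref{le:prim} (which the paper leaves to the reader) nor any structure theory of roots of unity, and it works verbatim for an element of any abelian group; the price is invoking B\'ezout as an external fact and the care --- which you correctly supply --- about negative exponents $x,y$. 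Within the paper the order-based proof is the more economical choice, since Lemma~\ref{le:prim} is already set up and reused in Lemma~\ref{le:roots}, but your proof would serve just as well.
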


\begin{proof}
By Lemma~\ref{le:prim} $\zeta$ is a primitive $d$th root of unity where
$d$ divides $a_1$ and $a_2$. Clearly $d$ divide $GCD(a_1,a_2)$.
Hence $\zeta^{GCD(a_1,a_2)}=1$.
\end{proof}

\begin{lemma}\label{le:interpolate}
Let $f$ be a polynomial of degree $L-1$.
If there are $L$ rationals $r$ such that
$f(r)$ is rational then all of the coefficients of $f$ are rational.
\end{lemma}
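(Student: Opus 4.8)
The plan is to recover the coefficients of $f$ from its values by Lagrange interpolation, observing that every quantity entering the interpolation formula is rational. Write $f(x)=c_0+c_1x+\cdots+c_{L-1}x^{L-1}$, and let $r_1,\ldots,r_L$ be the $L$ rationals at which $f$ takes rational values. The hypothesis must of course intend $L$ \emph{distinct} rationals; otherwise the claim would be false, since repeated points carry no new information. Distinctness is the hinge of the argument.

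First I would recall that a polynomial of degree $\le L-1$ is uniquely determined by its values at $L$ distinct points: if two such polynomials agreed at $L$ points, their difference would be a polynomial of degree $\le L-1$ with $L$ roots, hence identically zero. Thus $f$ is the \emph{unique} polynomial of degree $\le L-1$ passing through the points $(r_i,f(r_i))$ for $i=1,\ldots,L$.

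Next I would write down the Lagrange interpolation formula
$$f(x)=\sum_{i=1}^{L} f(r_i)\prod_{j\ne i}\frac{x-r_j}{r_i-r_j}.$$
Each Lagrange basis polynomial $\prod_{j\ne i}\frac{x-r_j}{r_i-r_j}$ has rational coefficients, because the $r_j$ are rational, the differences $r_i-r_j$ are nonzero rationals (by distinctness), and products and quotients of rationals are rational. Multiplying each basis polynomial by the rational scalar $f(r_i)$ and summing over $i$ therefore yields a polynomial all of whose coefficients are rational. Since, by the uniqueness noted above, this polynomial equals $f$, every coefficient $c_k$ of $f$ is rational, as claimed.

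As an alternative I would mention the equivalent Vandermonde formulation: the values $(f(r_1),\ldots,f(r_L))$ are obtained from the coefficient vector $(c_0,\ldots,c_{L-1})$ by multiplication with the Vandermonde matrix $V=(r_i^{\,k})$, whose determinant $\prod_{i<j}(r_j-r_i)$ is a nonzero rational precisely because the $r_i$ are distinct. Cramer's rule then expresses each $c_k$ as a ratio of determinants with rational entries, hence as a rational number. I do not expect any genuine obstacle in this lemma; the only subtlety is the tacit distinctness assumption, which is exactly what guarantees the nonvanishing denominators (equivalently, the invertibility of $V$) that the whole argument rests on.
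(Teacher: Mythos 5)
Your proof is correct and takes essentially the same route as the paper: both construct the Lagrange interpolation polynomial $\sum_{j} f(r_j)\prod_{i\ne j}\frac{x-r_i}{r_j-r_i}$, observe it has rational coefficients, and conclude $f$ equals it because two polynomials of degree $\le L-1$ agreeing at $L$ points coincide. Your explicit attention to the distinctness of the $r_i$ (and the Vandermonde alternative) is a sound addition, but it is the same argument the paper gives.
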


\begin{proof}
Assume $r_1,\ldots,r_{L}$ are rational and
$f(r_1),\ldots,f(r_{L})$ are rational.

Let 
$h_j(x) = \prod_{i=1,i\ne j}^{L} \frac{x-r_i}{r_j-r_i}$.
Note that
(1) for all $x\in \{r_1,\ldots,r_{L}\}- \{r_j\}$, $h_j(x)=0$,
(2) $h(r_j)=1$, and (3) $h_j$ is a polynomial over the rationals of degree $L-1$.

Let $F(x)=\sum_{j=1}^{L} f(r_j) h_j(x).$
Clearly, for all $1\le i\le L$, $F(r_j)=f(r_j)$.
Hence $F$ and $f$ are polynomials of degree $L-1$ that agree
on $L$ points, so $f=F$. Since $F$ has rational coefficients,
$f$ has rational coefficients.
\end{proof}

\begin{note}
The above proof is based on a well-known technique, called Lagrange interpolation, to
find a polynomial that goes through a given set of points.
\end{note}

\section{Main Theorem}\label{se:main}

\begin{theorem}\label{th:main}
Let $a_1,\ldots,a_L\in\nat$ be relatively prime.
Let $M=LCM(a_1,\ldots,a_L)$ and 
$M'= LCM(GCD(a_1,a_2),GCD(a_1,a_3),\ldots,GCD(a_{L-1},a_L))$.
\begin{enumerate}
\item
There exists $h_0,h_1,\ldots,h_{M-1}\in \rat[x]$ of degree $L-1$ such that $CH(n) = h_{n \mod M}(n)$.
\item
There exists $h_0',\ldots,h_{M'-1}'\in \rat[x]$ of degree $L-1$, and rationals $b_0,\ldots,b_{M-1}$
such that 
$CH(n) = h_{n\mod {M'}}'(n) + b_{n\mod  M}.$
\item
$$CH(n)=\frac{n^{L-1}}{(L-1)!a_1a_2\cdots a_L} + O(n^{L-2}).$$
\end{enumerate}
\end{theorem}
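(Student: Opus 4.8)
The engine for all three parts is the ordinary generating function
$$\sum_{n=0}^{\infty} CH(n)\,x^n = \prod_{i=1}^{L}\frac{1}{1-x^{a_i}},$$
which holds because expanding each factor as $\frac{1}{1-x^{a_i}}=1+x^{a_i}+x^{2a_i}+\cdots$ and multiplying out counts, for each $n$, the number of ways to write $n=\sum_i c_i a_i$ with $c_i\ge 0$. The plan is to put the right-hand side in partial fractions over $\complex$, read off the coefficient of $x^n$ using Lemma~\ref{le:taylor} in the form $\frac{1}{(1-\beta x)^k}=\sum_n \binom{n+k-1}{k-1}\beta^n x^n$, and then exploit the arithmetic of the roots of unity that appear.

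For Part 1, note that $\prod_i(1-x^{a_i})=(-1)^L g(x)$ with $g$ as in Lemma~\ref{le:roots}. Each pole of the generating function sits at a zero $\zeta$ of $g$, hence at a root of unity; writing the corresponding term as $\frac{A}{(1-\beta x)^k}$ with $\beta=\zeta^{-1}$ (a root of unity of the same order as $\zeta$), Lemma~\ref{le:roots} tells us the pole at $\zeta=1$ has order $L$ while every other pole has order $\le L-1$. The decomposition therefore yields $CH(n)=\sum_{\beta}\beta^n Q_\beta(n)$, where $Q_1$ has degree $L-1$ and each $Q_\beta$ with $\beta\ne 1$ has degree at most $L-2$. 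Since every such $\beta$ satisfies $\beta^M=1$, the factor $\beta^n$ depends only on $n\bmod M$, so for each fixed residue $r$ the sum collapses to a single polynomial $h_r(n)$ of degree $L-1$. To obtain $h_r\in\rat[x]$ I would \emph{not} track the complex residues directly: instead, $CH(n)\in\Z$ for every $n$, so $h_r$ takes rational values at the $L$ points $n=r,r+M,\ldots,r+(L-1)M$, and Lemma~\ref{le:interpolate} forces all its coefficients to be rational.

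For Part 2 the key observation is that the non-constant part of $h_r$ can only come from poles of order $\ge 2$. A term $Q_\beta$ of degree $\ge 1$ corresponds (via $\beta=\zeta^{-1}$) to a zero $\zeta$ of multiplicity $\ge 2$, which by Lemma~\ref{le:roots} means the order $d$ of $\zeta$ divides at least two of the $a_i$, say $a_i$ and $a_j$; Lemma~\ref{le:gcd} then gives $d\mid GCD(a_i,a_j)$, hence $d\mid M'$ and $\beta^{M'}=1$. Consequently, writing $h_r(n)=\sum_\beta \beta^r Q_\beta(n)$, the contribution $\sum_\beta \beta^r\widetilde Q_\beta(n)$ of the degree-$\ge 1$ terms (where $\widetilde Q_\beta$ is $Q_\beta$ with its constant term deleted) depends on $r$ only through $r\bmod M'$. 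Thus any two residues $r_1\equiv r_2\pmod{M'}$ yield polynomials $h_{r_1},h_{r_2}$ agreeing in every coefficient except possibly the constant. I would then take $h'_{r'}$ to be this common constant-free degree-$(L-1)$ part shared by all $r\equiv r'\pmod{M'}$, and set $b_r$ to be the constant term of $h_r$; both are rational by Part 1, and $CH(n)=h_{n\bmod M}(n)=h'_{n\bmod M'}(n)+b_{n\bmod M}$.

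For Part 3 only the top-order pole $\beta=1$ contributes to the leading term. Writing $\prod_i(1-x^{a_i})=(1-x)^L\prod_i\frac{1-x^{a_i}}{1-x}$ and using $\frac{1-x^{a_i}}{1-x}=1+x+\cdots+x^{a_i-1}\to a_i$ as $x\to 1$, the coefficient of $\frac{1}{(1-x)^L}$ is
$$A=\lim_{x\to 1}\frac{(1-x)^L}{\prod_i(1-x^{a_i})}=\frac{1}{a_1\cdots a_L}.$$
By Lemma~\ref{le:taylor} this term contributes $A\binom{n+L-1}{L-1}=\frac{n^{L-1}}{(L-1)!\,a_1\cdots a_L}+O(n^{L-2})$, while every remaining term is either a lower-order pole at $1$ (a polynomial of degree $\le L-2$) or a pole at some $\beta\ne 1$ of order $m_\beta\le L-1$ (giving $\beta^n$, of modulus $1$, times a polynomial of degree $\le L-2$); all are $O(n^{L-2})$. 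The step I expect to be the crux is the Part 2 observation tying order-$\ge 2$ poles to divisors of $M'$ through Lemma~\ref{le:gcd}; everything else is bookkeeping atop the partial-fraction expansion, with the one genuine subtlety in Part 1 — rationality without computing complex residues — dispatched by Lemma~\ref{le:interpolate}.
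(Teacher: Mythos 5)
Your proposal is correct and follows essentially the same route as the paper: the same generating function, the same partial-fraction decomposition with the pole at $1$ of order $L$ and all other poles (at roots of unity) of order $\le L-1$ via Lemma~\ref{le:roots}, the same use of Lemma~\ref{le:gcd} to show order-$\ge 2$ poles are $M'$-th roots of unity, the same limit computation for the leading coefficient, and rationality via Lemma~\ref{le:interpolate}. The only deviation is bookkeeping in Part 2: you split each $h_r$ into its constant term ($b_r$) and its constant-free part ($h'$), whereas the paper splits the partial-fraction sum into the order-$1$ and order-$\ge 2$ pole contributions (so its $h_r'$ retains a constant term); your variant is if anything slightly cleaner, since rationality of the $b_r$'s and of $h'$ then follows directly from Part 1 instead of a second pass through the interpolation argument.
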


\begin{proof}

The value of $CH(n)$ is the coefficient of $x^n$ in 
\[
\begin{array}{rl}
f(x)& =(1+x^{a_1} + x^{2a_1} + \cdots )(1 + x^{a_2} + x^{2a_2} + \cdots)\cdots(1+x^{a_L} + x^{2a_L} + \cdots)\cr
    & = \frac{1}{ (1-x^{a_1}) (1-x^{a_2}) \cdots (1-x^{a_L}) }.\cr
\end{array}
\]

Assume $a_1\le \cdots \le a_L$ and $i_o$ is such that $a_{i_o}\ge 2$. (If no such $i_o$ exists then $(\forall n)[CH(n)=1]$ and
our theorem is trivially true.)
For all $i_o\le i\le L$, $1\le j\le a_i-1$,
let $\alphaa i j$
be the $j$th $a_i$th 
root of unity (we think of 1 as being
the 0th root of unity).
Let $\nn i j$
be the number of times the factor $(1-\alphaa i j x)$ appears in
$(1-x^{a_1}) (1-x^{a_2}) \cdots (1-x^{a_L})$. 
Since $a_{i_o}\ge 2$ none of the $\alphaa i j$ are 1.
This will be important in the proof of part 3.

We rewrite $f(x)$ using partial fractions and Lemma~\ref{le:taylor} to obtain
$$f(x) = \frac{1}{(1-x)^L \prod_{i=i_o}^L \prod_{j=1}^{a_i-1} (1-\alphaa i j x )^{\nn i j}}
=
\sum_{i=i_o}^L \frac{A_i}{(1-x)^i}
+
\sum_{i=i_o}^L \sum_{j=1}^{a_i-1} \sum_{k=1}^{\nn i j} \frac{\AAA  i j k }{(1-\alphaa i j x )^k}
$$

$$
=
\sum_{i=i_o}^L
\sum_{n=0}^\infinity  A_i\binom{n+i-1}{i-1} x^n
+
\sum_{i=i_o}^L 
\sum_{j=1}^{a_i-1} 
\sum_{k=1}^{\nn i j}
\sum_{n=0}^\infinity \AAA  i j k \binom{n+k-1}{k-1} \alphaaa i j n x^n
$$

$$
=
\sum_{n=0}^\infinity  
\biggl ( 
\sum_{i=i_o}^L
A_i\binom{n+i-1}{i-1}
+
\sum_{i=i_o}^L 
\sum_{j=1}^{a_i-1} 
\sum_{k=1}^{\nn i j}
\AAA  i j k \binom{n+k-1}{k-1} \alphaaa i j n
\biggr ) 
 x^n.
$$

Hence

$$
CH(n)=
\sum_{i=i_o}^L
A_i\binom{n+i-1}{i-1}
+
\sum_{i=i_o}^L 
\sum_{j=1}^{a_i-1} 
\sum_{k=1}^{\nn i j}
\AAA  i j k \binom{n+k-1}{k-1} \alphaaa i j n.
$$

By Lemma~\ref{le:roots} $\nn i j \le L-1$. 
Hence we can write $CH(n)$ as 
$\sum_{e=0}^{L-1} COE(n,e)n^e$ 
where the $COE(n,e)$ are 
functions of the $\alphaaa i j n$.

\bigskip

\noindent
1) Since $\alphaa i j$ is an $a_i$th root of unity, 
$\alphaaa i j n = \alphaaa i j {n \mod M}$.
Hence, for all $0\le e\le L-1$,  $COE(n,e)=COE(n\mod M,e)$.
Therefore the coefficients only depend on $n \mod M$.
For $0\le r\le M-1$ let

$$
h_r(n)=
\sum_{i=i_o}^L
A_i\binom{n+i-1}{i-1}
+
\sum_{i=i_o}^L 
\sum_{j=1}^{a_i-1} 
\sum_{k=1}^{\nn i j}
\AAA  i j k \binom{n+k-1}{k-1} \alphaaa i j r
=
\sum_{e=0}^{L-1} COE(r,e)n^e.
$$

Clearly $h_r$ is a polynomial in $n$ of degree $L-1$ and
$CH(n) = h_{n\mod M}(n)$. Since there is an infinite number of $n\in\nat$ 
(namely all $n\equiv r \pmod M$) such that $h_r(n)\in\nat$,
by Lemma~\ref{le:interpolate} the coefficients
of $h_r$ are rational numbers. Hence $h_r(x)\in\rat[x]$.

\bigskip

\noindent
2) For  $0\le r\le M-1$ let

$$
h_r'(n)=\sum_{i=i_o}^L A_i\binom{n+i-1}{i-1}+\sum_{i=i_o}^L\sum_{j=1}^{a_i-1}\sum_{k=2}^{\nn i j}\AAA i j k \binom{n+k-1}{k-1} \alphaaa i j r.
$$

Note that $h_r(n)$ and $h_r'(n)$ only differ with regard to whether $k$ starts at 1 or 2. 
For $0\le r\le M-1$ let

$$b_r=h_r(n)-h_r'(n) = \sum_{i=i_o}^L\sum_{j=1}^L \AAA i j 1 \alphaaa i j r.$$

Clearly the $b_r$'s are constants (we later show they are rational) and 

$$CH(n) = h_{n \mod M}(n)= h_{n \mod M'}'(n) + b_{n \mod M}.$$

For $e\ge 1$, the coefficient of $n^e$ in both $h_r(n)$ and $h_r'(n)$ are the same.
We need to show that, for $e\ge 1$, $COE(n,e)=COE(n \mod {M'} ,e)$.
Let $e\ge 1$. 
Let $\X k e$
be such that $\binom{n+k-1}{k-1} = \sum_{e=0}^{k-1} \X k e n^e$.
Then

$$
COE(n,e)=
\sum_{i=i_o}^L
A_i \X i e
+
\sum_{i=i_o}^L 
\sum_{j=1}^{a_i-1} 
\sum_{k=2}^{\nn i j}
\AAA  i j k \X k e \alphaaa i j n
$$

Fix $i,j$. If $\nn i j \le 1$ the there is no $k$ with $2\le k\le \nn i j$; therefore we assume 
$\nn i j \ge 2$. So the term $(1-\alphaa i j x)$ 
appears at least twice when factoring 
$(1-x^{a_1}) \cdots (1-x^{a_L})$.
Therefore there exists $i'\ne i$ such that $\alphaa i j$
is an $a_{i'}$th root of unity. Since $\alphaa i j$ is also an $a_i$th root 
of unity,
by Lemma~\ref{le:gcd}, $\alphaa i j$ is a $d$th root of unity
where $d=GCD(a_i,a_{i'})$. 
Since $d$ divides $M'$, $\alphaaa i j {M'} = 1$,
hence $\alphaaa i j n = \alphaaa i j {n \mod {M'}}$.
Therefore
$$
COE(n,e)=
\sum_{i=i_o}^L
A_i \X i e
+
\sum_{i=i_o}^L 
\sum_{j=1}^{a_i-1} 
\sum_{k=2}^{\nn i j}
\AAA  i j k \X k e \alphaaa i j {n \mod {M'}}
$$
which clearly only depends on $n \mod {M'}$.

Fix $0\le r \le M'-1$ and $0\le s\le M-1$ such that there is an 
infinite number of $n\in\nat$ with 
$n\equiv r \pmod {M'}$ and $n\equiv s \pmod M$. 
Hence, for an infinite number of $n\in \nat$,
$h_r'(n)+b_s=CH(n)\in\nat$. By Lemma~\ref{le:interpolate} $h_r'(x)\in \rat[x]$ and the $b_s$'s are rationals.

\bigskip

\noindent
3) 
$$CH(n) = 
\sum_{i=i_o}^L
A_i\binom{n+i-1}{i-1}
+
\sum_{i=i_o}^L 
\sum_{j=1}^{a_i-1} 
\sum_{k=1}^{\nn i j}
\AAA  i j k \binom{n+k-1}{k-1} \alphaaa i j n.
=
A_L\binom{n+L-1}{L-1}.$$

We find $A_L$.

$$
\frac{1}{ (1-x^{a_1}) (1-x^{a_2}) \cdots (1-x^{a_L}) } =
\sum_{i=i_o}^L \frac{A_i}{(1-x)^i}
+
\sum_{i=i_o}^L \sum_{j=1}^{a_i-1} \sum_{k=1}^{\nn i j} \frac{\AAA  i j k }{(1-\alphaa i j x )^k}.
$$

Multiply both sides by $(1-x)^L$ to get

$$
\frac{(1-x)^L}{ (1-x^{a_1}) (1-x^{a_2}) \cdots (1-x^{a_L}) }=
A_L+\sum_{i=i_o}^{L-1} A_i (1-x)^{L-i} + 
\sum_{i=i_o}^L \sum_{j=1}^{a_i-1} \sum_{k=1}^{\nn i j}
 \frac{\AAA  i j k (1-x^L)}{(1-\alphaa i j x )^k}.
$$

The left hand side can be rewritten as

$$
\frac{1}{
(1+x+x^2+\cdots+x^{a_1-1})
(1+x+x^2+\cdots+x^{a_2-1})
\cdots
(1+x+x^2+\cdots+x^{a_L-1})
}
.
$$

As $x$ approaches 1 (from the left), the LHS approaches
$\frac{1}{a_1a_2\cdots a_L}$.
Since for all $i,j$,  $\alphaa i j \ne 1$, as $x$ approaches 1,
the RHS approaches $A_L$.
Hence $A_L=\frac{1}{a_1a_2\cdots a_L}$ and 
$COE(n,L-1)= \frac{1}{(L-1)!a_1a_2\cdots a_L}$.

%
%
%
\end{proof}

An equivalent definition of $CH(n)$ is the
number of integer points in the  set

$$
P_n=\{(x_1,\ldots,x_L)\st \hbox{ all $x_i\ge 0$  and } \sum_{i=1}^L a_ix_i = n \}.
$$

The quantity $\frac{1}{(L-1)!a_1a_2\cdots a_L}$ is the volume of $P_1$. 
Hence Theorem~\ref{th:main}.3 says that the number of integer points in $P_n$
is approximately $VOL(P_1)n^{L-1}$.
Counting the number of integer points in a convex polytope, including
the application to coin problems, is 
studied by Beck and Robins~\cite{beckrobins}.

The following is an easy corollary of Theorem~\ref{th:main}.

\begin{corollary}
Let $a_1,\ldots,a_L$ have greatest common divisor $d$.
Let $M=LCM(a_1/d,\ldots,a_L/d)$ and
$M'= LCM(GCD(a_1/d,a_2/d),GCD(a_1/d,a_3/d),\ldots,GCD(a_{L-1}/d,a_L/d))$.
\begin{enumerate}
\item
If $n\not\equiv 0 \pmod d$ then $CH(n)=0$.
\item
There exists $h_0,h_1,\ldots,h_{M-1}\in \rat[x]$ of degree $L-1$ such that 
if $n\equiv 0 \pmod d$ then 
$CH(n) = h_{n \mod M}(n)$.
\item
There exists $h_0',\ldots,h_{M'-1}'\in \rat[x]$ of degree $L-1$, and rationals  
$b_0,\ldots,b_{M-1}\in \rat$,
such that 
if $n\equiv 0 \pmod d$ then 
$CH(n) = h_{n\mod {M'}}'(n) + b_{n\mod  M}.$
\item
If $CH(n)$ is restricted to $n\equiv 0 \pmod d$ then 
$$CH(n)=\frac{n^{L-1}d^L}{(L-1)!a_1a_2\cdots a_L} + O(n^{L-2}).$$
\end{enumerate}
\end{corollary}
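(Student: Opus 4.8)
The plan is to reduce everything to the relatively prime case already settled in Theorem~\ref{th:main}. Write $b_i = a_i/d$ for $1\le i\le L$. Since $d=GCD(a_1,\ldots,a_L)$, the $b_i$ are relatively prime, and the quantities $M=LCM(b_1,\ldots,b_L)$ and $M'=LCM(GCD(b_1,b_2),\ldots,GCD(b_{L-1},b_L))$ named in the corollary are exactly the $M,M'$ to which Theorem~\ref{th:main} applies for the coin set $\{b_1,\ldots,b_L\}$. Let $D(m)$ be the denumerant of $\{b_1,\ldots,b_L\}$, i.e. the number of solutions of $m=\sum_i b_i x_i$ with all $x_i\ge 0$. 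The key observation is that $\sum_i a_i x_i = d\sum_i b_i x_i$ is always a multiple of $d$, so if $d\nmid n$ there are no solutions and $CH(n)=0$, which is part~1; and if $d\mid n$, dividing the equation by $d$ gives a bijection between the solutions of $\sum_i a_i x_i=n$ and those of $\sum_i b_i x_i=n/d$, so that $CH(n)=D(n/d)$.

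For parts~2 and~3 I would apply parts~1 and~2 of Theorem~\ref{th:main} to $\{b_1,\ldots,b_L\}$, obtaining rational polynomials $g_r,g_r'\in\rat[x]$ of degree $L-1$ and rationals $c_0,\ldots,c_{M-1}$ with $D(m)=g_{m\bmod M}(m)$ and $D(m)=g'_{m\bmod M'}(m)+c_{m\bmod M}$. Since we only consider $n\equiv 0\pmod d$, I substitute $m=n/d$. Because $x\mapsto x/d$ is an affine substitution with rational coefficient, $g_r(n/d)$ is again a rational polynomial in $n$ of the same degree $L-1$; setting $h_r(x)=g_r(x/d)$ and $h_r'(x)=g'_r(x/d)$ produces the required pieces, with the constants inherited from the $c_s$. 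Part~4 is obtained the same way from Theorem~\ref{th:main}(3): we have $D(m)=\frac{m^{L-1}}{(L-1)!\,b_1\cdots b_L}+O(m^{L-2})$, and substituting $m=n/d$ together with $b_1\cdots b_L=(a_1\cdots a_L)/d^{L}$ converts the main term into a constant multiple of $n^{L-1}/(a_1\cdots a_L)$. Rationality of all coefficients and constants is then free, either by re-running the Lemma~\ref{le:interpolate} argument on $\{b_1,\ldots,b_L\}$ or by inheriting it directly from Theorem~\ref{th:main}.

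The one step that demands genuine care — and the only real obstacle — is the bookkeeping under the substitution $m=n/d$, in two respects. First, the exact power of $d$ in the leading term must be tracked: it arises from combining the factor $(n/d)^{L-1}$ with the denominator change $1/(b_1\cdots b_L)=d^{L}/(a_1\cdots a_L)$, so the two contributions $d^{-(L-1)}$ and $d^{L}$ must be reconciled carefully rather than read off. Second, one must identify the modulus by which the pieces are genuinely indexed: $D$ depends on $m\bmod M$, and as $n$ ranges over the multiples of $d$ the value $m=n/d$ ranges over all of $\nat$, so the residue that actually controls $CH(n)$ is $n$ taken modulo $dM$. Since $dM=LCM(d b_1,\ldots,d b_L)=LCM(a_1,\ldots,a_L)$, and since $n\equiv n'\pmod{dM}$ with $d\mid n,n'$ forces $n/d\equiv n'/d\pmod M$, this residue does determine $(n/d)\bmod M$ and hence the correct piece; checking this compatibility is the place where the reduction is not completely automatic.
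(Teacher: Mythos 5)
Your reduction is the intended one---the paper gives no argument at all beyond calling this an easy corollary of Theorem~\ref{th:main}---and its core is correct: since $\sum_i a_ix_i$ is always a multiple of $d$, we get $CH(n)=0$ when $n\not\equiv 0\pmod d$, and when $d\mid n$ dividing the equation by $d$ gives $CH(n)=D(n/d)$, where $D$ is the denumerant of the relatively prime set $a_1/d,\ldots,a_L/d$; substituting $x/d$ into the polynomials that Theorem~\ref{th:main} provides for that set preserves degree $L-1$ and rationality. However, the two points you flag as ``demanding genuine care'' are not bookkeeping: pushed to their conclusion, each one shows that the corollary \emph{as printed} is misstated. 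Your write-up identifies both obstacles and then stops; it should instead finish the computation and state the corrected claims, because the literal statement cannot be proved.

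(a) Indexing. Your construction gives $CH(n)=h_{(n/d)\bmod M}(n)$, and, as you observe, the controlling residue is $n\bmod dM$, where $dM=LCM(a_1,\ldots,a_L)$. That is genuinely weaker than what part 2 asserts, namely control by $n\bmod M$; your own divisibility argument rescues $n\bmod M$ only when $GCD(d,M)=1$. For the coin set $\{4,6\}$ we have $d=2$, $M=LCM(2,3)=6$, and $GCD(d,M)=2$; every $n\equiv 0\pmod 6$ is even, and the values $CH(6),CH(12),CH(18),CH(24),CH(30),CH(36)$ are $1,2,2,3,3,4$, whose first differences alternate $1,0,1,0$, so no polynomial of any degree (let alone degree $L-1=1$) interpolates them. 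Thus part 2 as printed is false; the pieces must be indexed by $n\bmod LCM(a_1,\ldots,a_L)$, and likewise in part 3 the two moduli must be $dM'=LCM$ of the pairwise $GCD(a_i,a_j)$ and $dM$. (b) The power of $d$. Completing the reconciliation you postpone,
\[
\frac{(n/d)^{L-1}}{(L-1)!\,(a_1/d)\cdots(a_L/d)}
=\frac{n^{L-1}}{d^{L-1}}\cdot\frac{d^{L}}{(L-1)!\,a_1\cdots a_L}
=\frac{d\,n^{L-1}}{(L-1)!\,a_1\cdots a_L},
\]
so the leading term carries $d$, not $d^{L}$, and part 4 as printed is also wrong: for $\{4,6\}$ it would give $CH(n)\approx n/6$ on even $n$, whereas in fact $CH(n)=D(n/2)\approx n/12$. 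With these two corrections stated explicitly, your argument is a complete and correct proof of the corrected corollary; as it stands, it proves something different from (and incompatible with) the statement it claims to prove.
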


\section{Examples and Conjectures}\label{se:ex}

In the Appendices we present, for a variety of coin sets, $M$, $M'$,
$h_{0\le r\le M}$, $h_{0\le r\le M'}'$, and upper/lower bounds on the $b_i$'s.
When calculating $M'$ 
we omit the pairs of the form $GCD(1,a_j)$ since $GCD(1,a_i)=1$.
For $h_r'$ we take the version with 0 constant term.
We obtained the polynomials via Lagrange interpolation. 
In this section we describe the results and what they might mean.

Let the coin set be $\{1,5,10,25\}$, so that $M=50$ and $M'=5$.
In Appendix~\ref{ap:h151025} we have the polynomials $h_{0\le r\le 49}$.
Note that
(1) if $r_1\equiv r_2 \pmod 5$ then $h_{r_1}$ and $h_{r_2}$ agree on all the coefficients except the constant term,
and  (2) all of the leading coefficients are the same. This is predicted by Theorem~\ref{th:main}.
Also note that 
(1) all of the coefficients are positive,
(2) for all coefficients $c$,
$2(L-1)a_1\cdots a_Lc\in\nat$, and 
(3) the $b_i$'s are small.
Do (1), (2), (3)  hold for all coin sets?

\subsection{Are the Coefficients Always Positive?}

We refer to the statement 

{\it for all coin sets all of the coefficients of the $h$-polynomials associated to them are positive} 

\noindent
as (1).

Clearly (1) does not always hold: if a coin set has $a_1\ne 1$ then $CH(1)=0$ so
some coefficient of $h_1$ has to be negative. In 
Appendices~\ref{ap:h234} 
and \ref{ap:h356} 
we present the polynomials
for the coin sets $\{2,3,4\}$ and $\{3,5,6\}$. For $\{2,3,4\}$ three of the polynomials
have a negative constant term. For $\{3,5,6\}$ eleven of the polynomials have a negative
constant term. All of the non-constant terms have positive coefficients.

Does (1) hold if $a_1=1$?
Alas no.
Of the 138 polynomials for the coin set $\{1,4,6,11\}$,
three of them have a negative constant term. We present these
three polynomials in Appendix~\ref{ap:h14611}.
For all of the polynomials,
all of the non-constant terms 
have positive coefficients.

Does (1) hold if we only look at the non-constant terms?
If we allow a coin denomination to 
appear twice then no.
In Appendix~\ref{ap:h1191920} we present the
polynomials for the coin set $\{1,19,19,20\}$ that have negative linear term.
Of the 380 total polynomials there are 60 (or 3/19) that have a negative linear term.
We also have the following empirical results, which we do not give the polynomials for:
1/7 of the polynomials for $(1,21,21,22)$ have a negative linear term.

Based on our empirical evidence and talking to Matthias Beck and Michelle Vergne (experts in the field)
we have the following conjectures.
\begin{enumerate}
\item
If $a_1,a_2,\ldots,a_L$ are relatively prime then all of the associated polynomials 
have positive coefficients except possibly the constant term.
(It might be easier to prove the $a_1=1$ case.)
\item
If $a_1,a_2,\ldots,a_L$ are relatively prime and $a_1=1$
then all of the associated polynomials
have positive coefficients.
\item
(Michelle Vergne emailed us this conjecture)
For $x$ large and $x<y$ some of the associated polynomials to $(1,x,x,y)$ will
have a negative linear term.
\end{enumerate}

\subsection{Is $2(L-1)a_1\cdots a_Lc$ Always an Integer?}

We refer to the statement 

{\it for the coin set $\{a_1,\ldots,a_L\}$, for all coefficients $c$ of the $h_r$'s, 
$2(L-1)a_1\cdots a_Lc\in\Z$}

\noindent
as (2).

Statement (2) holds for all of the coin sets we have looked at.
There is a known theorem which may be relevant here. We describe it.

A {\it convex rational polytope} is an intersection of
halfspaces such that all of the corner points have rational 
coordinates. 
Recall that $CH(n)$ is the number of integer points in the 
convex rational polytope

$$
P_n=\{(x_1,\ldots,x_L)\st \hbox{ all $x_i\ge 0$  and } \sum_{i=1}^L a_ix_i = n \}.
$$

In Beck and Robins~\cite{beckrobins}
Theorem 3.20 (page 80) states (roughly) that the number of integer points in
a parameterized convex rational polytope is
a piecewise polynomial.
Their Exercise 3.33 (Page 87) states that
for $L$-dimensional rational polytopes in $\real^L$,
for all coefficients  $c$ of those polynomials, $L!c\in\Z$.
Our $P_n$ is not $L$-dimensional and hence their Exercise does not apply.
It is plausible that their Exercise 
can be modified to hold for polytopes that are not $L$-dimensional, 
or polytopes that
are exactly of the type of $P_n$ above, to yield (2).

\subsection{Are the $b_i$'s Small?}

For the coin sets $\{1,5,10,25\}$, $\{2,3,4\}$, $\{3,5,6\}$, 
and $\{1,4,6,11\}$
the $b_i$'s are all in $[-0.4277,1.3636]$.
The smallest difference between the $b_i$'s is $1.0962$
and the largest difference is $1.4277$.

One conjecture is that there is some constant $B$ such that for {\it all} coin sets
the $b_i$'s are in $[-B,B]$. Another conjecture is that there is some slow growing function
$h(L,a_1,\ldots,a_L)$ such that for the coin set $a_1,\ldots,a_L$
all of the $b_i$'s are in $[-h(L,a_1,\ldots,a_L),h(L,a_1,\ldots,a_L)]$.
Similar conjectures can be made for the difference.

All of the coin sets above have no repeated coins. For the coin set $\{1,19,19,20\}$
the smallest $b_i$ is -6.3644 and the largest $b_i$ is 7.0953,
for a difference of $13.4597$.
It may be that such coin sets behave very differently.
Hence we only make the above conjectures for coin sets where all of the coins are distinct.

\section{Acknowledgment}

We would like to thank Daniel Smolyak,
Larry Washington, Sam Zbarsky for
proofreading and discussion.
We would like to thank Matthias Beck and 
Michele Vergne for pointing 
us to the rich literature
of the change problem and for many enlightening email exchanges.

We would also like to thank the referees. They made comments that
improved the paper considerably. In particular, the proofs in
Sections~\ref{se:lemmas} are much
improved, and the proof of Theorem~\ref{th:main} is somewhat
less cumbersome.

\newpage

\appendix
\section{$h_r$ Polynomials for $\{1,5,10,25\}$}\label{ap:h151025}

$M=LCM(1,5,10,25)=50$.

\[ 
 \begin{array}{rl|rl}
h_{0}(x) & = \frac{1}{7500}x^{3} + \frac{9}{1000}x^{2} + \frac{53}{300}x + 1& 
 h_{5}(x) & = \frac{1}{7500}x^{3} + \frac{9}{1000}x^{2} + \frac{53}{300}x + \frac{7}{8}\\
h_{1}(x) & = \frac{1}{7500}x^{3} + \frac{43}{5000}x^{2} + \frac{1193}{7500}x + \frac{4161}{5000}& 
 h_{6}(x) & = \frac{1}{7500}x^{3} + \frac{43}{5000}x^{2} + \frac{1193}{7500}x + \frac{442}{625}\\
h_{2}(x) & = \frac{1}{7500}x^{3} + \frac{41}{5000}x^{2} + \frac{1067}{7500}x + \frac{426}{625}& 
 h_{7}(x) & = \frac{1}{7500}x^{3} + \frac{41}{5000}x^{2} + \frac{1067}{7500}x + \frac{2783}{5000}\\
h_{3}(x) & = \frac{1}{7500}x^{3} + \frac{39}{5000}x^{2} + \frac{947}{7500}x + \frac{2737}{5000}& 
 h_{8}(x) & = \frac{1}{7500}x^{3} + \frac{39}{5000}x^{2} + \frac{947}{7500}x + \frac{264}{625}\\
h_{4}(x) & = \frac{1}{7500}x^{3} + \frac{37}{5000}x^{2} + \frac{833}{7500}x + \frac{268}{625}& 
 h_{9}(x) & = \frac{1}{7500}x^{3} + \frac{37}{5000}x^{2} + \frac{833}{7500}x + \frac{1519}{5000}\\
\end{array} 
\]
\[ 
 \begin{array}{rl|rl}
h_{10}(x) & = \frac{1}{7500}x^{3} + \frac{9}{1000}x^{2} + \frac{53}{300}x + \frac{6}{5}& 
 h_{15}(x) & = \frac{1}{7500}x^{3} + \frac{9}{1000}x^{2} + \frac{53}{300}x + \frac{7}{8}\\
h_{11}(x) & = \frac{1}{7500}x^{3} + \frac{43}{5000}x^{2} + \frac{1193}{7500}x + \frac{5161}{5000}& 
 h_{16}(x) & = \frac{1}{7500}x^{3} + \frac{43}{5000}x^{2} + \frac{1193}{7500}x + \frac{442}{625}\\
h_{12}(x) & = \frac{1}{7500}x^{3} + \frac{41}{5000}x^{2} + \frac{1067}{7500}x + \frac{551}{625}& 
 h_{17}(x) & = \frac{1}{7500}x^{3} + \frac{41}{5000}x^{2} + \frac{1067}{7500}x + \frac{2783}{5000}\\
h_{13}(x) & = \frac{1}{7500}x^{3} + \frac{39}{5000}x^{2} + \frac{947}{7500}x + \frac{3737}{5000}& 
 h_{18}(x) & = \frac{1}{7500}x^{3} + \frac{39}{5000}x^{2} + \frac{947}{7500}x + \frac{264}{625}\\
h_{14}(x) & = \frac{1}{7500}x^{3} + \frac{37}{5000}x^{2} + \frac{833}{7500}x + \frac{393}{625}& 
 h_{19}(x) & = \frac{1}{7500}x^{3} + \frac{37}{5000}x^{2} + \frac{833}{7500}x + \frac{1519}{5000}\\
\end{array} 
\]
\[ 
 \begin{array}{rl|rl}
h_{20}(x) & = \frac{1}{7500}x^{3} + \frac{9}{1000}x^{2} + \frac{53}{300}x + \frac{4}{5}& 
 h_{25}(x) & = \frac{1}{7500}x^{3} + \frac{9}{1000}x^{2} + \frac{53}{300}x + \frac{7}{8}\\
h_{21}(x) & = \frac{1}{7500}x^{3} + \frac{43}{5000}x^{2} + \frac{1193}{7500}x + \frac{3161}{5000}& 
 h_{26}(x) & = \frac{1}{7500}x^{3} + \frac{43}{5000}x^{2} + \frac{1193}{7500}x + \frac{442}{625}\\
h_{22}(x) & = \frac{1}{7500}x^{3} + \frac{41}{5000}x^{2} + \frac{1067}{7500}x + \frac{301}{625}& 
 h_{27}(x) & = \frac{1}{7500}x^{3} + \frac{41}{5000}x^{2} + \frac{1067}{7500}x + \frac{2783}{5000}\\
h_{23}(x) & = \frac{1}{7500}x^{3} + \frac{39}{5000}x^{2} + \frac{947}{7500}x + \frac{1737}{5000}& 
 h_{28}(x) & = \frac{1}{7500}x^{3} + \frac{39}{5000}x^{2} + \frac{947}{7500}x + \frac{264}{625}\\
h_{24}(x) & = \frac{1}{7500}x^{3} + \frac{37}{5000}x^{2} + \frac{833}{7500}x + \frac{143}{625}& 
 h_{29}(x) & = \frac{1}{7500}x^{3} + \frac{37}{5000}x^{2} + \frac{833}{7500}x + \frac{1519}{5000}\\
\end{array} 
\]
\[ 
 \begin{array}{rl|rl}
h_{30}(x) & = \frac{1}{7500}x^{3} + \frac{9}{1000}x^{2} + \frac{53}{300}x + 1& 
 h_{35}(x) & = \frac{1}{7500}x^{3} + \frac{9}{1000}x^{2} + \frac{53}{300}x + \frac{43}{40}\\
h_{31}(x) & = \frac{1}{7500}x^{3} + \frac{43}{5000}x^{2} + \frac{1193}{7500}x + \frac{4161}{5000}& 
 h_{36}(x) & = \frac{1}{7500}x^{3} + \frac{43}{5000}x^{2} + \frac{1193}{7500}x + \frac{567}{625}\\
h_{32}(x) & = \frac{1}{7500}x^{3} + \frac{41}{5000}x^{2} + \frac{1067}{7500}x + \frac{426}{625}& 
 h_{37}(x) & = \frac{1}{7500}x^{3} + \frac{41}{5000}x^{2} + \frac{1067}{7500}x + \frac{3783}{5000}\\
h_{33}(x) & = \frac{1}{7500}x^{3} + \frac{39}{5000}x^{2} + \frac{947}{7500}x + \frac{2737}{5000}& 
 h_{38}(x) & = \frac{1}{7500}x^{3} + \frac{39}{5000}x^{2} + \frac{947}{7500}x + \frac{389}{625}\\
h_{34}(x) & = \frac{1}{7500}x^{3} + \frac{37}{5000}x^{2} + \frac{833}{7500}x + \frac{268}{625}& 
 h_{39}(x) & = \frac{1}{7500}x^{3} + \frac{37}{5000}x^{2} + \frac{833}{7500}x + \frac{2519}{5000}\\
\end{array} 
\]
\[ 
 \begin{array}{rl|rl}
h_{40}(x) & = \frac{1}{7500}x^{3} + \frac{9}{1000}x^{2} + \frac{53}{300}x + 1& 
 h_{45}(x) & = \frac{1}{7500}x^{3} + \frac{9}{1000}x^{2} + \frac{53}{300}x + \frac{27}{40}\\
h_{41}(x) & = \frac{1}{7500}x^{3} + \frac{43}{5000}x^{2} + \frac{1193}{7500}x + \frac{4161}{5000}& 
 h_{46}(x) & = \frac{1}{7500}x^{3} + \frac{43}{5000}x^{2} + \frac{1193}{7500}x + \frac{317}{625}\\
h_{42}(x) & = \frac{1}{7500}x^{3} + \frac{41}{5000}x^{2} + \frac{1067}{7500}x + \frac{426}{625}& 
 h_{47}(x) & = \frac{1}{7500}x^{3} + \frac{41}{5000}x^{2} + \frac{1067}{7500}x + \frac{1783}{5000}\\
h_{43}(x) & = \frac{1}{7500}x^{3} + \frac{39}{5000}x^{2} + \frac{947}{7500}x + \frac{2737}{5000}& 
 h_{48}(x) & = \frac{1}{7500}x^{3} + \frac{39}{5000}x^{2} + \frac{947}{7500}x + \frac{139}{625}\\
h_{44}(x) & = \frac{1}{7500}x^{3} + \frac{37}{5000}x^{2} + \frac{833}{7500}x + \frac{268}{625}& 
 h_{49}(x) & = \frac{1}{7500}x^{3} + \frac{37}{5000}x^{2} + \frac{833}{7500}x + \frac{519}{5000}\\
\end{array} 
\]


\section{$h_r'$ Polynomials for $\{1,5,10,25\}$}\label{ap:hr151025}

$M'=LCM(GCD(5,10),GCD(5,25),GCD(10,25))=LCM(5,5,5)=5$.

\[ 
 \begin{array}{rl}
h_{0}'(x) & = \frac{1}{7500}x^{3} + \frac{9}{1000}x^{2} + \frac{53}{300}x \cr
h_{1}'(x) & = \frac{1}{7500}x^{3} + \frac{43}{5000}x^{2} + \frac{1193}{7500}x \cr
h_{2}'(x) & = \frac{1}{7500}x^{3} + \frac{41}{5000}x^{2} + \frac{1067}{7500}x \cr
h_{3}'(x) & = \frac{1}{7500}x^{3} + \frac{39}{5000}x^{2} + \frac{947}{7500}x \cr
h_{4}'(x) & = \frac{1}{7500}x^{3} + \frac{37}{5000}x^{2} + \frac{833}{7500}x \cr
\end{array}
\]

The smallest $b_i$ is $\frac{519}{5000}=0.1038$ and the largest $b_i$ is $\frac{6}{5}=1.2$. 
The difference between the largest and smallest is 1.0962.

\section{$h_r$ Polynomials for $\{2,3,4\}$}\label{ap:h234}

$M=LCM(2,3,4)=12$

\[ 
 \begin{array}{rl|rl} 
h_{0}(x) & = \frac{1}{48}x^{2} + \frac{1}{4}x + 1&
 h_{4}(x) & = \frac{1}{48}x^{2} + \frac{1}{8}x - \frac{7}{48} \\ 
h_{1}(x) & = \frac{1}{48}x^{2} + \frac{1}{8}x - \frac{7}{48}&
 h_{5}(x) & = \frac{1}{48}x^{2} + \frac{1}{4}x + \frac{3}{4} \\ 
h_{2}(x) & = \frac{1}{48}x^{2} + \frac{1}{4}x + \frac{5}{12}&
 h_{6}(x) & = \frac{1}{48}x^{2} + \frac{1}{8}x + \frac{5}{48} \\ 
h_{3}(x) & = \frac{1}{48}x^{2} + \frac{1}{8}x + \frac{7}{16}&
 h_{7}(x) & = \frac{1}{48}x^{2} + \frac{1}{4}x + \frac{2}{3} \\ 
\end{array} 
\] 
\[ 
 \begin{array}{rl|rl} 
h_{8}(x) & = \frac{1}{48}x^{2} + \frac{1}{4}x + \frac{2}{3}&
 h_{12}(x) & = \frac{1}{48}x^{2} + \frac{1}{8}x - \frac{7}{48} \\ 
h_{9}(x) & = \frac{1}{48}x^{2} + \frac{1}{8}x + \frac{3}{16}&
 h_{13}(x) & = \frac{1}{48}x^{2} + \frac{1}{4}x + \frac{5}{12} \\ 
h_{10}(x) & = \frac{1}{48}x^{2} + \frac{1}{4}x + \frac{5}{12}&
 h_{14}(x) & = \frac{1}{48}x^{2} + \frac{1}{8}x + \frac{7}{16} \\ 
h_{11}(x) & = \frac{1}{48}x^{2} + \frac{1}{8}x + \frac{5}{48}&
 h_{15}(x) & = \frac{1}{48}x^{2} + \frac{1}{4}x + \frac{2}{3} \\ 
\end{array} 
\] 

\section{$h_r'$ Polynomials for $\{2,3,4\}$}\label{se:hp234}

$M'=LCM(GCD(2,3),GCD(2,4),GCD(3,4))=LCM(1,2,1)=2$.

\[
\begin{array}{rl}
h_{0}'(x) & = \frac{1}{48}x^{2} + \frac{1}{4}x \cr
h_{1}'(x) & = \frac{1}{48}x^{2} + \frac{1}{8}x \cr
\end{array}
\]


The smallest $b_i$ is $-\frac{7}{48}=-0.1458$ and the largest $b_i$ is $1$.
The difference between the largest and smallest is $1.1458$.


\newpage

\section{$h_r$ Polynomials for $\{3,5,6\}$}\label{ap:h356}

$M=LCM(3,5,6)=30$.

\[ 
 \begin{array}{rl|rl} 
h_{0}(x) & = \frac{1}{180}x^{2} + \frac{2}{15}x + 1&
 h_{9}(x) & = \frac{1}{180}x^{2} + \frac{7}{90}x + \frac{17}{36} \\ 
h_{1}(x) & = \frac{1}{180}x^{2} + \frac{1}{45}x - \frac{1}{36}&
 h_{10}(x) & = \frac{1}{180}x^{2} + \frac{2}{15}x + 1\\
h_{2}(x) & = \frac{1}{180}x^{2} + \frac{7}{90}x - \frac{8}{45}&
 h_{11}(x) & = \frac{1}{180}x^{2} + \frac{1}{45}x - \frac{77}{180} \\ 
h_{3}(x) & = \frac{1}{180}x^{2} + \frac{2}{15}x + \frac{11}{20}&
 h_{12}(x) & = \frac{1}{180}x^{2} + \frac{7}{90}x + \frac{1}{45} \\ 
h_{4}(x) & = \frac{1}{180}x^{2} + \frac{1}{45}x - \frac{8}{45}&
 h_{13}(x) & = \frac{1}{180}x^{2} + \frac{2}{15}x + \frac{7}{20} \\ 
h_{5}(x) & = \frac{1}{180}x^{2} + \frac{7}{90}x + \frac{17}{36}&
 h_{14}(x) & = \frac{1}{180}x^{2} + \frac{1}{45}x + \frac{2}{9} \\ 
h_{6}(x) & = \frac{1}{180}x^{2} + \frac{2}{15}x + 1&
 h_{15}(x) & = \frac{1}{180}x^{2} + \frac{7}{90}x + \frac{17}{36} \\ 
h_{7}(x) & = \frac{1}{180}x^{2} + \frac{1}{45}x - \frac{77}{180}&
 h_{16}(x) & = \frac{1}{180}x^{2} + \frac{2}{15}x + \frac{3}{5} \\ 
h_{8}(x) & = \frac{1}{180}x^{2} + \frac{7}{90}x + \frac{1}{45}&
 h_{17}(x) & = \frac{1}{180}x^{2} + \frac{1}{45}x - \frac{41}{180} \\ 
\end{array} 
\] 

\[ 
 \begin{array}{rl|rl} 
h_{18}(x) & = \frac{1}{180}x^{2} + \frac{2}{15}x + \frac{4}{5}&
 h_{27}(x) & = \frac{1}{180}x^{2} + \frac{7}{90}x + \frac{49}{180} \\ 
h_{19}(x) & = \frac{1}{180}x^{2} + \frac{1}{45}x - \frac{77}{180}&
 h_{28}(x) & = \frac{1}{180}x^{2} + \frac{2}{15}x + \frac{3}{5} \\ 
h_{20}(x) & = \frac{1}{180}x^{2} + \frac{7}{90}x + \frac{2}{9}&
 h_{29}(x) & = \frac{1}{180}x^{2} + \frac{1}{45}x - \frac{1}{36} \\ 
h_{21}(x) & = \frac{1}{180}x^{2} + \frac{2}{15}x + \frac{3}{4}&
 h_{30}(x) & = \frac{1}{180}x^{2} + \frac{7}{90}x + \frac{2}{9} \\ 
h_{22}(x) & = \frac{1}{180}x^{2} + \frac{1}{45}x - \frac{8}{45}&
 h_{31}(x) & = \frac{1}{180}x^{2} + \frac{2}{15}x + \frac{7}{20} \\ 
h_{23}(x) & = \frac{1}{180}x^{2} + \frac{7}{90}x + \frac{49}{180}&
 h_{32}(x) & = \frac{1}{180}x^{2} + \frac{1}{45}x + \frac{1}{45} \\ 
h_{24}(x) & = \frac{1}{180}x^{2} + \frac{2}{15}x + \frac{3}{5}&
 h_{33}(x) & = \frac{1}{180}x^{2} + \frac{7}{90}x + \frac{13}{180} \\ 
h_{25}(x) & = \frac{1}{180}x^{2} + \frac{1}{45}x - \frac{1}{36}&
 h_{34}(x) & = \frac{1}{180}x^{2} + \frac{2}{15}x + 1\\
h_{26}(x) & = \frac{1}{180}x^{2} + \frac{7}{90}x + \frac{2}{9}&
 h_{35}(x) & = \frac{1}{180}x^{2} + \frac{1}{45}x - \frac{1}{36} \\ 
\end{array} 
\] 

\section{$h_r'$ Polynomials for $\{3,5,6\}$}\label{ap:hp356}

$M'=LCM(GCD(3,5),GCD(3,6),GCD(5,6))=LCM(1,3,1)=3$.

\[
\begin{array}{rl}
h_{0}(x) & = \frac{1}{180}x^{2} + \frac{2}{15}x\cr
h_{1}(x) & = \frac{1}{180}x^{2} + \frac{1}{45}x\cr
h_{2}(x) & = \frac{1}{180}x^{2} + \frac{7}{90}x\cr
h_{3}(x) & = \frac{1}{180}x^{2} + \frac{2}{15}x\cr
\end{array}
\]

The smallest $b_i$ is $-\frac{77}{180}=-0.4277$ and the largest $b_i$ is $1$.
The difference between the largest and smallest is $1.4277$.



\newpage

\section{Some of the $h_r$ Polynomials for $\{1,4,6,11\}$}\label{ap:h14611}

$M=LCM(1,4,6,11)=264$

\[ 
 \begin{array}{rl}
 h_{22}(x) & = \frac{1}{1584}x^{3} + \frac{1}{48}x^{2} + \frac{101}{528}x - \frac{9}{176}\cr
 h_{87}(x) & = \frac{1}{1584}x^{3} + \frac{1}{48}x^{2} + \frac{101}{528}x - \frac{9}{176}\cr
 h_{98}(x) & = \frac{1}{1584}x^{3} + \frac{1}{48}x^{2} + \frac{7}{33}x - \frac{23}{396}\cr
\end{array} 
\]

\section{$h_r'$ Polynomials for $\{1,4,6,11\}$}\label{ap:hr14611}

$M'=LCM(GCD(4,6),GCD(4,11),GCD(6,11))=LCM(2,1,1)=2$.

\[ 
 \begin{array}{rl}
h_{0}'(x) & = \frac{1}{1584}x^{3} + \frac{1}{48}x^{2} + \frac{101}{528}x\cr
h_{1}'(x) & = \frac{1}{1584}x^{3} + \frac{1}{48}x^{2} + \frac{101}{528}x\cr
\end{array} 
\] 

The smallest $b_i$ is $-\frac{23}{396}=-0.05808$ and the largest $b_i$ is $\frac{15}{11}=1.3636$.
The difference between the largest and smallest is $1.4168$.


\section{Some of the $h_r$ Polynomials for $\{1,19,19,20\}$}\label{ap:h1191920}

$M=LCM(1,19,19,20)=380$.

For all $0\le k\le 19$
\[
 \begin{array}{rl}
h_{19k}(x)    & = \frac{1}{43320}x^{3} + \frac{59}{28880}x^{2} - \frac{1}{228}x + 1\cr
h_{19k+17}(x) & = \frac{1}{43320}x^{3} + \frac{59}{28880}x^{2} - \frac{1}{228}x + \frac{35}{16} \cr
h_{19k+18}(x) & = \frac{1}{43320}x^{3} + \frac{59}{28880}x^{2} - \frac{127}{4332}x + \frac{5279}{7220}\cr
\end{array}
\]

\section{$h_r'$ Polynomials for $\{1,19,19,20\}$}\label{ap:hp1191920}

$M'=LCM(GCD(19,19),GCD(19,20))=LCM(19,1)=19$.

\[ 
 \begin{array}{rl|rl} 
h_{0}'(x) & = \frac{1}{43320}x^{3} + \frac{59}{28880}x^{2} - \frac{1}{228}x + 1&
h_{5}'(x) & = \frac{1}{43320}x^{3} + \frac{59}{28880}x^{2} + \frac{341}{4332}x + \frac{3191}{5776} \\ 
h_{1}'(x) & = \frac{1}{43320}x^{3} + \frac{59}{28880}x^{2} + \frac{77}{4332}x + \frac{28307}{28880}&
h_{6}'(x) & = \frac{1}{43320}x^{3} + \frac{59}{28880}x^{2} + \frac{377}{4332}x + \frac{2883}{7220} \\ 
h_{2}'(x) & = \frac{1}{43320}x^{3} + \frac{59}{28880}x^{2} + \frac{161}{4332}x + \frac{6623}{7220}&
h_{7}'(x) & = \frac{1}{43320}x^{3} + \frac{59}{28880}x^{2} + \frac{401}{4332}x + \frac{7047}{28880} \\ 
h_{3}'(x) & = \frac{1}{43320}x^{3} + \frac{59}{28880}x^{2} + \frac{233}{4332}x + \frac{23671}{28880}&
h_{8}'(x) & = \frac{1}{43320}x^{3} + \frac{59}{28880}x^{2} + \frac{413}{4332}x + \frac{9}{95} \\ 
h_{4}'(x) & = \frac{1}{43320}x^{3} + \frac{59}{28880}x^{2} + \frac{293}{4332}x + \frac{251}{361}&
h_{9}'(x) & = \frac{1}{43320}x^{3} + \frac{59}{28880}x^{2} + \frac{413}{4332}x - \frac{233}{5776} \\ 
\end{array} 
\] 
\[ 
 \begin{array}{rl|rl} 
h_{10}'(x) & = \frac{1}{43320}x^{3} + \frac{59}{28880}x^{2} + \frac{401}{4332}x - \frac{221}{1444}&
h_{15}'(x) & = \frac{1}{43320}x^{3} + \frac{59}{28880}x^{2} + \frac{161}{4332}x - \frac{549}{5776} \\ 
h_{11}'(x) & = \frac{1}{43320}x^{3} + \frac{59}{28880}x^{2} + \frac{377}{4332}x - \frac{6793}{28880}&
h_{16}'(x) & = \frac{1}{43320}x^{3} + \frac{59}{28880}x^{2} + \frac{77}{4332}x + \frac{177}{1805} \\ 
h_{12}'(x) & = \frac{1}{43320}x^{3} + \frac{59}{28880}x^{2} + \frac{341}{4332}x - \frac{503}{1805}&
h_{17}'(x) & = \frac{1}{43320}x^{3} + \frac{59}{28880}x^{2} - \frac{1}{228}x + \frac{10707}{28880} \\ 
h_{13}'(x) & = \frac{1}{43320}x^{3} + \frac{59}{28880}x^{2} + \frac{293}{4332}x - \frac{7949}{28880}&
h_{18}'(x) & = \frac{1}{43320}x^{3} + \frac{59}{28880}x^{2} - \frac{127}{4332}x + \frac{5279}{7220} \\ 
h_{14}'(x) & = \frac{1}{43320}x^{3} + \frac{59}{28880}x^{2} + \frac{233}{4332}x - \frac{313}{1444}&
h_{19}'(x) & = \frac{1}{43320}x^{3} + \frac{59}{28880}x^{2} - \frac{1}{228}x + \frac{35}{16} \\ 
\end{array} 
\] 

The smallest $b_i$ is $-\frac{36731}{5776}=-6.3644$ and the largest $b_i$ is $\frac{12807}{1805}=7.0953$.
The difference between the largest and smallest is $13.4597$.



\begin{thebibliography}{10}

\bibitem{agnden}
G.~Agnarsson.
\newblock On the {S}ylvester denumerants for general restricted partitions.
\newblock \url{http://math.gmu.edu/~geir/SylvDen2.pdf}.

\bibitem{alffrob}
J.~R. Alfonsin.
\newblock {\em The diophantine {F}robenius problem}.
\newblock Oxford University Press, Oxford, 2006.

\bibitem{alonden}
G.~Alon and P.~Clark.
\newblock On the number of representations of an integer by a linear form.
\newblock {\em Journal of Integer Sequences}, 8:Article 05.5.2, 2005.

\bibitem{balden}
V.~Baldoni, N.~Berline, J.~D. Loera, B.~Dutra, and M.~Vergne.
\newblock Coefficients of {S}ylvester's denumerant, 2013.
\newblock \url{http://arxiv.org/pdf/1312.7147}. Earlier version appeared in
  {\it 25th International Conference on Formal Power Series and Algebraic
  Combinatorics} in 2013. That version had a different title: {\it Top degree
  coefficient of the denumerant}.

\bibitem{beckfrob}
M.~Beck, I.~Gessel, and T.~Komatsu.
\newblock The polynomial part of a restricted partition function related to the
  {F}robenius problem.
\newblock {\em Electronic Journal of Combinatorics}, 8(1):Note 7, 2001.

\bibitem{beckrobins}
M.~Beck and S.~Robins.
\newblock {\em Computing the continuous discretely: {I}nteger-point enumeration
  in polyhedra}.
\newblock Springer, New York, Heidelberg, Berlin, 2015.

\bibitem{bellden}
E.~Bell.
\newblock Interpolated denumerants and {L}ambert series.
\newblock {\em American Journal of Mathematics}, 65:382--386, 1943.

\bibitem{comtet}
L.~Comtet.
\newblock {\em Advanced combinatorics}.
\newblock Reidel Publshing Company, Boston, 1974.

\bibitem{dickson}
L.~Dickson.
\newblock {\em History of the theory of numbers}.
\newblock Dover, 1920.

\bibitem{komfrob}
T.~Komatsu.
\newblock On the number of solutions of the diophantine equation of
  {F}robenius-- general case.
\newblock {\em Mathematical Communications}, 8:195--206, 2003.

\bibitem{losden}
P.~Losonek.
\newblock Denumerants and their approximations.
\newblock {\em Journal of Combinatorial Mathematics and Combinatorial
  Computing}, 18:225--232, 1995.

\bibitem{riordan}
J.~Riordan.
\newblock {\em An introduction to combinatorial analysis}.
\newblock Wiley, New York, 1958.

\bibitem{serden}
S.~Sertoz and A.~Ozluk.
\newblock On the number of representations of an integer by a linear form.
\newblock {\em Istanbul Univ. Fen Fak. Mak. Derg}, 50:66--77, 1991.

\bibitem{wilfgen}
H.~Wilf.
\newblock {\em Generatingfunctionology}.
\newblock Academic Press, Waltham, MA, 1994.

\end{thebibliography}

\end{document}